\theoremstyle{plain}
\newtheorem{thm}{\it Theorem}[section]
\theoremstyle{remark}
\newtheorem{defn}[thm]{Def{}inition}
\newtheorem{rem}[thm]{Remark}
\newtheorem{exa}[thm]{Example}
\numberwithin{equation}{section}
\begin{document}
\title [Perturbation of  Hilbert-Schmidt Frames]{On perturbation of  Hilbert-Schmidt frames}

\author[Jyoti]{Jyoti}
	\address{{\bf{Jyoti}}, Department of Mathematics,
		University of Delhi, Delhi-110007, India.}
	\email{jyoti.sheoran3@gmail.com}

\author[Lalit   Kumar Vashisht]{Lalit  Kumar  Vashisht$^{*}$}
	\address{{\bf{Lalit  Kumar  Vashisht}}, Department of Mathematics,
		University of Delhi, Delhi-110007, India.}
	\email{lalitkvashisht@gmail.com}

\begin{abstract}
In this paper, we study perturbation of Hilbert-Schmidt frames under structured modifications, where the perturbation takes the form of replacing finitely or infinitely many frame elements. We establish explicit criteria under which the perturbed sequence retains the Hilbert-Schmidt frame property. In the finite case, the stability bounds depend quantitatively on the perturbation size and the number of altered elements. For the infinite case, we identify sufficient conditions ensuring stability under globally controlled perturbations. Our study  includes illustrative examples demonstrating the applicability of the results.
\end{abstract}

\subjclass[2020]{42C15,  42C30,  43A32, 47B02}

\keywords{Frames; Hilbert-Schmidt Weaving; Perturbation.\\
The research of Jyoti is supported by the  WISE-PDF research grant of WISE--KIRAN Division, Department of Science and Technology (DST), Government of India (Grant No.: DST/WISE-PDF/PM-6/2023(G)).  Lalit \break  Kumar Vashisht is  supported by the Faculty Research Programme Grant-IoE, University of Delhi \ (Grant No.: Ref. No./IoE/2025-26/12/FRP).\\
$^*$Corresponding author}

\maketitle

\baselineskip15pt

\section{Introduction}
The concept of frames, which was first based on work by Gabor \cite{G46} in decomposing complex signals (functions) into elementary functions, was introduced by Duffin and Schaeffer \cite{DS} and later reviewed by Young \cite{Young} in the study of several challenging problems pertaining to non-harmonic Fourier series. A countable collection  $\{x_k\}_{k \in \mathbb{I}}$ of members of a  separable Hilbert space $\mathcal{H}$ is a frame for $\mathcal{H}$ if, there exist positive real numbers $a_o$ and $b_o$, called the \emph{lower frame bound} and \emph{upper frame bound} of $\{x_k\}_{k \in \mathbb{I}}$,  respectively,  such that
\begin{align}\label{b01.1}
a_o \|x\|^2\leq  \sum_{k \in \mathbb{I}}|\langle x, x_k\rangle|^2 \leq b_o \|x\|^2 \ \text{for all} \ x \in \mathcal{H}.
\end{align}
In \eqref{b01.1}, if only upper inequality holds, then we say that $\{x_k\}_{k \in \mathbb{I} }$   is  a \emph{Bessel sequence} with the \emph{Bessel bound} $b_o$. Further, the bounded linear map $S: \mathcal{H} \rightarrow \mathcal{H}$   given by $Sx = \sum_{k \in \mathbb{I}}\langle x, x_k\rangle x_k$ is called the \emph{frame operator}. If $\{x_k\}_{k \in \mathbb{I} }$ is a frame for $\mathcal{H}$, then the frame operator is positive and invertible on $\mathcal{H}$. This gives a series expansion for each element of the space $\mathcal{H}$. To be precise, for each $x \in \mathcal{H}$, we have $x = SS^{-1}x =\sum_{k \in \mathbb{I}} \langle x, S^{-1}x_k \rangle x_k$. We refer to texts  by Heil \cite{H11}, Han \cite{BHan}, Young \cite{Young}, Zhang and Jorgensen  \cite{Zpelle}  for frames and their applications in engineering science and pure mathematics.

\subsection{Related work}
To broaden the concept of frames, Sun \cite{WsunI}  took into account a family of operators acting on $\mathcal{H}$  with range in closed subspaces of another Hilbert space. More precisely,  for a given collection $\{G_j\}_{j \in \mathbb{I}}$ of closed subspaces of  a separable Hilbert space $\mathcal{K}$, a countable collection $\{T_j\}_{j \in \mathbb{I}}$, where each $T_j:\mathcal{H} \rightarrow  G_j$ is a bounded linear operator,  is a \emph{$g$-frame}  for $\mathcal{H}$ with respect to $\{G_j\}_{j \in \mathbb{I}}$ if
\begin{align*}
\alpha \|x\|^2 \leq \sum\limits_{j \in \mathbb{I}}\|T_j x\|^2 \leq \beta \|x\|^2, \  x \in \mathcal{H},
\end{align*}
holds for some positive real numbers $\alpha$ and $\beta$. Sun also proved some perturbation results for $g$-frames in \cite{WsunII}. Koo and Lim \cite{Yoo} examined Schatten $p$-class operators, the space of linear compact operators whose sequences of singular values are in $\ell^p$, in terms of frame requirements as part of the development of frames connected to operator theory. Von Neumann-Schatten $p$-frames in separable Banach spaces are covered in \cite{SA}.  In \cite{JVPDF1}, the authors recently examined Riesz bases and Hilbert-Schmidt frames for separable Hilbert spaces.
For relations between Hilbert-Schmidt operators and frames in separable Hilbert spaces, we refer to \cite{Balaza}. The books by Schatten \cite{SCHI} and Simon \cite{B.simon} are excellent resources for learning the basic characteristics of Hilbert-Schmidt operators. The paper \cite{WZhang} has several duality relations for Hilbert-Schmidt frames in separable Hilbert spaces. In \cite{LG}, G\v{a}vruta  presented frames for subspaces in terms of bounded linear operators on separable Hilbert spaces. Frames for subspaces were further studied in a series of published papers \cite{DVV17, DV17ar, XZG}.
Hilbert-Schmidt frames (see Sec.\,\ref{sec2}), consisting of sequences of bounded linear operators from a Hilbert space $\mathcal{H}$ into the Hilbert-Schmidt class $\mathcal{C}_2 \subseteq \mathcal{B}(\mathcal{K})$, have emerged as a powerful generalization of vector frames, particularly useful in analyzing operator-valued data and matrix-valued signal systems. In \cite{JV23}, the authors studied G\v{a}vruta's type frame conditions  for a class of Hilbert-Schmidt operators, in which a bounded linear operator on the underlying Hilbert space controls the  lower frame condition. They also gave some frame-preserving mappings for Hilbert-Schmidt frames for subspaces of a separable Hilbert space. The existence of  Hilbert-Schmidt frames  for  subspaces  of the Hilbert-Schmidt class $\mathcal{C}_2$ is also established in \cite{JV23}.
They proved in  \cite{JV23} that every separable Hilbert space admits a Hilbert-Schmidt frame with respect to a given separable Hilbert space. Fundamental properties of Hilbert-Schmidt Riesz bases in separable Hilbert spaces can be found in  \cite{JV23}.

In recent years, mathematicians and physicists have been interested in studying the stability of frames in Hilbert spaces, see \cite{GossonI, GossonII, H11, Young}. To be exact, fundamental properties associated with frame conditions must be preserved under perturbations in many applications of frames, such as  signal processing \cite{BHan, H11}, quantum physics \cite{GossonI}, time-frequency analysis \cite{KG}, distributed signal processing \cite{DVV17, DV17ar, VD3} and duality relations \cite{H11, Young}. Paley and Wiener's traditional perturbation result, which states that a sequence that is close enough to an orthonormal basis in a Hilbert space to naturally generate a basis, is where it all began \cite{H11}. Some Paley-Wiener type perturbation results for discrete frames in separable Hilbert spaces can be found in \cite{CK}. Perturbation of weaving frames for operators that have potential applications in distributed signal processing were studied in \cite{DV17ar}. Favier and Zalik presented stability conditions for frames of Gabor and wavelet systems in the paper \cite{FZUS}.
Very recently, authors of \cite{DivI} studied perturbation of  frames with Gabor and wavelet structure  in  the Lebesgue space $L^2(\mathbb{R})$ associated with  the Weyl-Heisenberg group and the extended affine group.
Different Paley-Wiener type perturbation results for various frame types in separable Hilbert spaces are accommodated in the texts by Gosson \cite{GossonII}, Heil \cite{H11}  and Young \cite{Young}. Motivated by above works, we derive new sufficient conditions for perturbation of Hilbert-Schmidt frames with explicit frame bounds. This paper investigates the stability of Hilbert-Schmidt frames under weaving perturbations, where finite or infinite number of frame elements are replaced or perturbed. The finite case models localized distortions encountered in practical scenarios such as packet loss in networks, sensor failure, or localized noise. We derive explicit quantitative conditions under which such weavings still yield Hilbert-Schmidt frames.


\subsection{Structure of the paper}
The work in the paper is organized as follows: In order to make the paper self-contained, necessary background on  Hilbert-Schmidt frames is given in Section \ref{sec2}. The main results are given in  Section \ref{sec3}. Firstly,  we present Theorem \ref{thm1} on stability under finite perturbations, supported by illustrative remark and example. Theorem \ref{thm2} establishes a quadratic perturbation bound that ensures frame stability under finite perturbations. Theorem \ref{th4} gives stability even when the perturbing sequence is not an HS-Bessel sequence.
Theorem \ref{thm3} extends the stability to the case where infinitely many frame elements are perturbed under the assumption that the perturbations decay rapidly. Example \ref{exa2} illustrates Theorem \ref{thm3}  by considering both polynomially and exponentially decaying perturbations. Finally, Theorem \ref{th5} shows that any weaving of sufficiently close Parseval Hilbert-Schmidt frames remains a Hilbert-Schmidt frame.

\subsection{Notation and Definitions}\label{sec2}
Symbol $I$ denotes a countable indexing set. We recall that for given sequences  $\{x_i\}_{i \in I}$ and $\{y_i\}_{i \in I}$ in $\mathcal{H}$ and a subset $\sigma \subset I$, the collection  $\{x_i\}_{i \in \sigma} \bigcup \{y_i\}_{i \in \sigma^{c}}$ is called a \emph{weaving} of $\{x_i\}_{i \in I}$ and $\{y_i\}_{i \in I}$, where $\sigma^{c}$ is the complement of $\sigma$. For frame conditions of weaving frames, we refer to \cite{DVV17} and references therein.  The collection of bounded linear operators from a normed space $\mathcal{X}$ into a normed space $\mathcal{Y}$ is denoted by  $\mathfrak{B}(\mathcal{X}, \mathcal{Y})$.  If $\mathcal{X}=\mathcal{Y}$, then we write $\mathfrak{B}(\mathcal{X}, \mathcal{Y})= \mathfrak{B}(\mathcal{X})$.  Let $\{e_i\}_{i \in I}$ be  an orthonormal basis of a separable Hilbert space $\mathcal{K}$. The Hilbert-Schimdt class, denoted by $\mathcal{C}_2(\mathcal{K})$, is the collection of all compact operators $T$ acting on $\mathcal{K}$ such that $\sum_{i \in I} \|Te_i\|^2 < \infty$. The space $\mathcal{C}_2(\mathcal{K})$ is a Banach space with respect to the norm $\|.\|_2$ defined as
\begin{align*}
\|T\|_2= \Big(\text{\textbf{trace}}(T^* T)\Big)^\frac{1}{2} =\big(\sum\limits_{i \in I} \| Te_i\|^2\big)^\frac{1}{2}, \ T \in \mathcal{C}_2(\mathcal{K}).
\end{align*}
 The space $\mathcal{C}_2(\mathcal{K})$ is a Hilbert space with respect to the inner product $[ .,. ]_{\textbf{tr}}$ defined as
\begin{align*}
[ T, S ]_{\textbf{tr}}= \text{\textbf{trace}}(S^* T)=\sum_{i \in I} \langle T e_i, S e_i \rangle , \ T, S \in \mathcal{C}_2(\mathcal{K}).
\end{align*}
For  $ x, y \in \mathcal{K}$, the operator $x \otimes y : \mathcal{K} \rightarrow \mathcal{K}$ is defined as
\begin{align*}
(x \otimes y )(u)= \langle u, y \rangle x, \ u \in \mathcal{K}.
\end{align*}
It can be easily verified that $x \otimes y \in \mathfrak{B}(\mathcal{K})$ with $\|x \otimes y\|= \|x\|\|y\|$. Moreover, $x \otimes y \in \mathcal{C}_2(\mathcal{K})$ with $\|x \otimes y\|_2= \|x\|\|y\|$ for $ x, y \in \mathcal{K}$. Also, $[ x \otimes y, u \otimes v ]_{\textbf{tr}}=\langle x, u \rangle \langle v, y \rangle$ for $ x, y, u, v \in \mathcal{K}$.

\begin{defn}\cite{SA}
A sequence $\{\Theta_i\}_{i \in I}$ of bounded linear operators from a Hilbert space $\mathcal{H}$ into the Hilbert-Schmidt class $\mathcal{C}_2(\mathcal{K})$ is called a Hilbert-Schmidt frame (HS-frame) for $\mathcal{H}$ with respect to $\mathcal{K}$ if there exist constants $A, B > 0$ such that
\begin{align}\label{eq3.2mf}
A \|x\|^2 \leq \sum_{i \in I} \|\Theta_i(x)\|_2^2 \leq B \|x\|^2 \quad \text{for all} \ x \in \mathcal{H}.
\end{align}
\end{defn}
The scalars $A$ and $B$ are called frame bounds of $\{\Theta_i\}_{i \in I}$. The HS-frame $\{\Theta_i\}_{i \in I}$ is called a Parseval HS-frame if it is possible to choose $A=B=1$.
If only the upper inequality in \eqref{eq3.2mf} is satisfied,   then we say that $\{\Theta_i\}_{i \in I}$  is  a Hilbert-Schmidt Bessel  (HS-Bessel) sequence with Bessel bound $B$.
In a very recent work, the authors in \cite{JVPDF1} studied Hilbert-Schmidt frames and Hilbert-Schmidt Riesz bases with respect to the tensor product of Hilbert spaces. A characterization of  Hilbert-Schmidt frames  with respect to the tensor product of Hilbert spaces can be found in \cite{JVPDF1}. The duality of  Hilbert-Schmidt frames  with respect to the tensor product of Hilbert spaces is also discussed in \cite{JVPDF1}.  In \cite{J}, the first author of this paper characterized Hilbert-Schmidt frames for the Hilbert-Schmidt class $\mathcal{C}_2(\mathcal{K})$ in terms of  surjectivity of a  bounded linear operator on $\mathcal{C}_2(\mathcal{K})$. Hilbert-Schmidt frames for separable Hilbert spaces, where the lower frame condition is controlled by a bounded linear operator were studied by the authors in \cite{JV23}.


\section{Main  Results}\label{sec3}

The following results quantify how small perturbations affect HS-frames under finite replacements. These findings are motivated by applications in distributed systems where only a subset of frame elements may be altered. The first result below establishes sufficient conditions under which a sequence formed by finite replacements in an HS-frame retains the frame property, with explicit dependence on the perturbation size and the cardinality of the modified set.
\begin{thm}\label{thm1}
Let  $\{G_i\}_{i \in I}$ be an HS-frame for $\mathcal{H}$ with respect to $\mathcal{K}$ with frame bounds $A_G, B_G$, and $\{F_i\}_{i \in I}$ be an HS-Bessel sequence for
$\mathcal{H}$ with respect to $\mathcal{K}$ with Bessel bound $B_F$. Fix any subset $\sigma \subseteq I$ with finite cardinality $|\sigma| \leq N < \infty$ and define the weaving  $\{H_i\}_{i \in I}$ as
\begin{align*}
H_i = \begin{cases}
F_i, & i \in \sigma, \\
G_i, & i \in \sigma^c.
\end{cases}
\end{align*}
Assume that  $\|F_i - G_i\| \leq \epsilon$ for all $i \in I$, where
\begin{align*}
0 <\epsilon < \frac{A_G}{2 N\max(\sqrt{B_F}, \sqrt{B_G})}.
\end{align*}
Then, $\{H_i\}_{i \in I}$ is an HS-frame for $\mathcal{H}$ with respect to $\mathcal{K}$ with frame bounds $\big[A_G - 2 N \epsilon \max(\sqrt{B_F}, \sqrt{B_G}) \big]$ and $\big[B_G + 2 N \epsilon \max(\sqrt{B_F}, \sqrt{B_G}) \big]$.
\end{thm}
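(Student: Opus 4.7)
The plan is to control $\sum_{i\in I}\|H_i x\|_2^2$ by a simple additive deviation from $\sum_{i\in I}\|G_i x\|_2^2$, since $H_i$ differs from $G_i$ only on the finite set $\sigma$. Concretely, I would first write
\begin{align*}
\sum_{i\in I}\|H_i x\|_2^2 \;=\; \sum_{i\in I}\|G_i x\|_2^2 \;+\; \sum_{i\in\sigma}\bigl(\|F_i x\|_2^2 - \|G_i x\|_2^2\bigr),
\end{align*}
so that the entire problem reduces to bounding the correction term indexed by the $\leq N$ perturbed slots.

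For each $i\in\sigma$, I would factor $\|F_i x\|_2^2 - \|G_i x\|_2^2 = (\|F_i x\|_2 - \|G_i x\|_2)(\|F_i x\|_2 + \|G_i x\|_2)$, then apply the reverse triangle inequality together with the operator norm estimate
\begin{align*}
\bigl|\|F_i x\|_2 - \|G_i x\|_2\bigr| \;\leq\; \|(F_i-G_i)x\|_2 \;\leq\; \|F_i-G_i\|\,\|x\| \;\leq\; \epsilon\,\|x\|.
\end{align*}
The other factor is bounded by combining the HS-Bessel property of each family: from $\sum_j\|F_j x\|_2^2\leq B_F\|x\|^2$ and $\sum_j\|G_j x\|_2^2\leq B_G\|x\|^2$ I get the single-term estimates $\|F_i x\|_2\leq\sqrt{B_F}\,\|x\|$ and $\|G_i x\|_2\leq\sqrt{B_G}\,\|x\|$, hence $\|F_i x\|_2+\|G_i x\|_2 \leq 2\max(\sqrt{B_F},\sqrt{B_G})\,\|x\|$. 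Multiplying the two factors and summing over the at most $N$ indices in $\sigma$ yields
\begin{align*}
\Bigl|\sum_{i\in\sigma}\bigl(\|F_i x\|_2^2 - \|G_i x\|_2^2\bigr)\Bigr| \;\leq\; 2N\epsilon\max(\sqrt{B_F},\sqrt{B_G})\,\|x\|^2.
\end{align*}

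Finally, substituting this back into the decomposition and using $A_G\|x\|^2\leq\sum_i\|G_i x\|_2^2\leq B_G\|x\|^2$ gives both the lower estimate $[A_G - 2N\epsilon\max(\sqrt{B_F},\sqrt{B_G})]\|x\|^2$ and the upper estimate $[B_G + 2N\epsilon\max(\sqrt{B_F},\sqrt{B_G})]\|x\|^2$; the hypothesis on $\epsilon$ is exactly what is needed to make the lower constant strictly positive. I do not anticipate a real obstacle here — the argument is essentially a finite telescoping combined with the elementary identity $a^2-b^2=(a-b)(a+b)$. The only delicate point to watch is that the ``$\|F_i-G_i\|$'' appearing in the hypothesis is the operator norm from $\mathcal{H}$ into $\mathcal{C}_2(\mathcal{K})$, so that $\|(F_i-G_i)x\|_2\leq\|F_i-G_i\|\,\|x\|$ is the right inequality to invoke, and that the per-index Bessel estimate is what supplies the $\max(\sqrt{B_F},\sqrt{B_G})$ factor rather than a more optimistic $\ell^2$-type bound.
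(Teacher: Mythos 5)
Your proposal is correct and follows essentially the same route as the paper: the same decomposition of $\sum_{i\in I}\|H_i(x)\|_2^2$ into $\sum_{i\in I}\|G_i(x)\|_2^2$ plus a correction over $\sigma$, the same per-index bound $\|F_i(x)-G_i(x)\|_2\bigl(\|F_i(x)\|_2+\|G_i(x)\|_2\bigr)\leq 2\epsilon\max(\sqrt{B_F},\sqrt{B_G})\|x\|^2$, and the same summation over at most $N$ indices. The only cosmetic difference is that you reach the intermediate bound via $a^2-b^2=(a-b)(a+b)$ and the reverse triangle inequality, whereas the paper telescopes the trace inner product and applies Cauchy--Schwarz; both give the identical estimate.
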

\begin{proof}
For any  $x \in \mathcal{H}$, we compute
\begin{align*}
&\Big| \sum_{i \in I} \|H_i(x)\|_2^2 - \sum_{i \in I} \|G_i(x)\|_2^2 \Big| \\
&\Big|  \sum_{i \in \sigma} \|F_i(x)\|_2^2 + \sum_{i \in \sigma^c} \|G_i(x)\|_2^2 - \sum_{i \in I} \|G_i(x)\|_2^2 \Big| \\
&= \Big| \sum_{i \in \sigma} \big( \|F_i(x)\|_2^2 - \|G_i(x)\|_2^2 \big) \Big|\\
& \leq   \sum_{i \in \sigma} \Big|  \|F_i(x)\|_2^2 - \|G_i(x)\|_2^2  \Big|\\
&=  \sum_{i \in \sigma} \Big| [F_i(x), F_i(x) ]_{\text{tr}}-[G_i(x), G_i(x) ]_{\text{tr}}\Big|\\
&=  \sum_{i \in \sigma} \Big| [F_i(x), F_i(x) ]_{\text{tr}}-[G_i(x), F_i(x) ]_{\text{tr}}+[G_i(x), F_i(x) ]_{\text{tr}}-[G_i(x), G_i(x) ]_{\text{tr}}\Big|\\
&=  \sum_{i \in \sigma} \Big| [F_i(x)-G_i(x), F_i(x) ]_{\text{tr}}+[G_i(x), F_i(x)-G_i(x)]_{\text{tr}}\Big|\\
&\leq  \sum_{i \in \sigma} \Big(\big| [F_i(x)-G_i(x), F_i(x) ]_{\text{tr}}\big|+\big|[G_i(x), F_i(x)-G_i(x)]_{\text{tr}}\big|\Big)\\
&\leq  \sum_{i \in \sigma} \Big(\| F_i(x)-G_i(x)\|_2 \|F_i(x) \|_2+\|G_i(x)\|_2 \|F_i(x)-G_i(x)\|_2\Big)\\
&=  \sum_{i \in \sigma} \| F_i(x)-G_i(x)\|_2 \big(\|F_i(x) \|_2+\|G_i(x)\|_2\big)\\
&\leq  \sum_{i \in \sigma} \|F_i - G_i\|\|x\| \big(\sqrt{B_F} \|x\|+\sqrt{B_G} \|x\|\big)\\
&\leq  \sum_{i \in \sigma} 2\epsilon  \max(\sqrt{B_F}, \sqrt{B_G}) \|x\|^2 \\
&\leq   2 N \epsilon  \max(\sqrt{B_F}, \sqrt{B_G}) \|x\|^2.
\end{align*}
Using this, we get
\begin{align*}
\sum_{i \in I} \|H_i(x)\|_2^2 &=\Big|\sum_{i \in I}( \|H_i(x)\|_2^2 - \|G_i(x)\|_2^2 + \|G_i(x)\|_2^2) \Big|\\
&\geq \sum_{i \in I} \|G_i(x)\|_2^2-\Big| \sum_{i \in I} \|H_i(x)\|_2^2 - \sum_{i \in I} \|G_i(x)\|_2^2 \Big|\\
&\geq A_G \|x\|^2- 2 N \epsilon \max(\sqrt{B_F}, \sqrt{B_G}) \|x\|^2\\
&=\big[A_G - 2 N \epsilon \max(\sqrt{B_F}, \sqrt{B_G}) \big] \|x\|^2, \ x \in \mathcal{H}.
\end{align*}
This proves the lower frame inequality. The upper frame inequality follows by similar estimates. That is, for any  $x \in \mathcal{H}$, we have
\begin{align*}
\sum_{i \in I} \|H_i(x)\|_2^2 &=\Big|\sum_{i \in I}( \|H_i(x)\|_2^2 - \|G_i(x)\|_2^2 + \|G_i(x)\|_2^2) \Big|\\
&\leq \sum_{i \in I} \|G_i(x)\|_2^2+\Big| \sum_{i \in I} \|H_i(x)\|_2^2 - \sum_{i \in I} \|G_i(x)\|_2^2 \Big|\\
&\leq B_G \|x\|^2+2 N \epsilon \max(\sqrt{B_F}, \sqrt{B_G}) \|x\|^2\\
&=\big[B_G + 2 N \epsilon \max(\sqrt{B_F}, \sqrt{B_G}) \big] \|x\|^2.
\end{align*}
The proof is complete.
\end{proof}

\begin{rem}
The above result critically depends on the assumption that the subset $\sigma$ where the operators are replaced is finite with $|\sigma| \leq N$. This ensures the perturbation caused by switching from $\{G_i\}_{i \in I}$ to $\{F_i\}_{i \in I}$ accumulates over finitely many indices, keeping the total perturbation small. The complement $\sigma^c = I \setminus \sigma$ may be infinite without affecting the stability because on $\sigma^c$ the operators remain unchanged as $\{G_i\}$. No additional perturbation is introduced there, so the infinite size of $\sigma^c$ poses no problem.
This models practical scenarios where only finitely many operators differ in the weaving, while the rest come from a stable HS-frame.
\end{rem}

The following example demonstrates how the finite weaving stability result applies concretely to a scenario with a small number of local perturbations in HS-frames.
\begin{exa}
Consider the index set $I = \mathbb{N}$ and fix $N=3$. Let $\{G_i\}_{i \in I}$ be an HS-frame with the frame bounds $A_G = 1, B_G = 2$, and $\{F_i\}_{i \in I}$ be an HS-Bessel sequence with Bessel bound $B_F=2$. Choose perturbation parameter $\epsilon = 0.1$. Then
\begin{align*}
2 N \epsilon \max(\sqrt{B_F}, \sqrt{B_G})  = 2 \times 3 \times 0.1 \times \sqrt{2} \approx 0.8485 < A_G = 1,
\end{align*}
which by Theorem \ref{thm1} ensures that the weaving formed by choosing up to $3$ operators from $\{F_i\}_{i \in I}$ and the rest from $\{G_i\}_{i \in I}$ is still an HS-frame.
\end{exa}

We now present a complementary result that uses a quadratic perturbation bound to establish frame stability. This approach offers an alternative estimate for the lower frame bound, particularly useful when the perturbation size or the number of modified elements is relatively small.
\begin{thm}\label{thm2}
Let  $\{G_i\}_{i \in I}$ be an HS-frame for $\mathcal{H}$ with respect to $\mathcal{K}$ with frame bounds $A_G$, $B_G$, and $\{F_i\}_{i \in I}$ be an HS-Bessel sequence for
$\mathcal{H}$ with respect to $\mathcal{K}$ with Bessel bound $B_F$. Fix any subset $\sigma \subseteq I$ with finite cardinality $|\sigma| \leq N < \infty$ and define the weaving  $\{H_i\}_{i \in I}$ as
\begin{align*}
H_i = \begin{cases}
F_i, & i \in \sigma, \\
G_i, & i \in \sigma^c.
\end{cases}
\end{align*}
Assume that  $\|F_i - G_i\| \leq \epsilon$ for all $i \in I$, for $\epsilon>0$ that satisfies $N\epsilon^2 +2\epsilon\sqrt{N} \sqrt{B_G}<A_G$.
Then,  $\{H_i\}_{i \in I}$ is an HS-frame for $\mathcal{H}$ with frame bounds $\big[A_G-N\epsilon^2 -2\epsilon\sqrt{N} \sqrt{B_G}\big]$ and $(B_F + B_G)$.
\end{thm}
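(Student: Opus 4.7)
The plan is to follow the same architecture as Theorem \ref{thm1} --- isolate the difference $\sum_{i \in I}\|H_i(x)\|_2^2 - \sum_{i \in I}\|G_i(x)\|_2^2$, bound it by a multiple of $\|x\|^2$, and then compose with the HS-frame bounds of $\{G_i\}_{i \in I}$ --- but to reorganise the intermediate estimate so that the Bessel bound $B_F$ never enters the lower inequality. That reorganisation is precisely what converts a linear-in-$\epsilon$ error into a linear-plus-quadratic one and produces the $-N\epsilon^2$ contribution.

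First I would write, for any $x \in \mathcal{H}$,
\[
\sum_{i \in I}\|H_i(x)\|_2^2 = \sum_{i \in I}\|G_i(x)\|_2^2 + \sum_{i \in \sigma}\bigl(\|F_i(x)\|_2^2 - \|G_i(x)\|_2^2\bigr),
\]
and factor each summand on the right as $\bigl(\|F_i(x)\|_2 - \|G_i(x)\|_2\bigr)\bigl(\|F_i(x)\|_2 + \|G_i(x)\|_2\bigr)$. The reverse triangle inequality controls the first factor by $\|(F_i-G_i)(x)\|_2 \leq \epsilon\|x\|$. The critical design choice comes for the second factor: instead of bounding $\|F_i(x)\|_2$ via the Bessel bound $\sqrt{B_F}\|x\|$ (which would merely reproduce the flavour of Theorem \ref{thm1}), I would apply the ordinary triangle inequality $\|F_i(x)\|_2 \leq \|G_i(x)\|_2 + \|(F_i-G_i)(x)\|_2$. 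This substitution is the whole point of the argument: it yields a mixed term $\|(F_i-G_i)(x)\|_2\|G_i(x)\|_2$ plus a pure square $\|(F_i-G_i)(x)\|_2^2$, removing $B_F$ entirely from the lower estimate at the cost of a quadratic remainder.

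Summing over $\sigma$, the Cauchy--Schwarz inequality applied to the mixed sum together with the HS-Bessel bound $B_G$ for $\{G_i\}_{i \in I}$ and the pointwise bound $\|(F_i-G_i)(x)\|_2 \leq \epsilon\|x\|$ delivers a contribution of at most $2\sqrt{N B_G}\,\epsilon\|x\|^2$, while the pure-square sum is trivially bounded by $N\epsilon^2\|x\|^2$ using $|\sigma|\leq N$. Combining these with $\sum_{i \in I}\|G_i(x)\|_2^2\geq A_G\|x\|^2$ yields the claimed lower bound $A_G - N\epsilon^2 - 2\epsilon\sqrt{N B_G}$, which is positive by the hypothesis on $\epsilon$. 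The upper bound needs no further work: splitting $\sum_{i \in I}\|H_i(x)\|_2^2$ into its $\sigma$ and $\sigma^c$ parts and enlarging each sum to all of $I$ gives $\sum_{i \in I}\|H_i(x)\|_2^2 \leq B_F\|x\|^2 + B_G\|x\|^2$, which is exactly $(B_F+B_G)\|x\|^2$.

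There is no genuine analytical obstacle --- each inequality employed is elementary --- but the delicate point, and the one worth highlighting in the write-up, is the branching at the second factor of the difference of squares. Choosing $\sqrt{B_F}\|x\|$ there is the ``wasteful'' route and loses sharpness whenever $\epsilon$ is much smaller than $\sqrt{B_F}$; substituting the triangle inequality in its place is exactly what makes Theorem \ref{thm2} quantitatively distinct from, and complementary to, Theorem \ref{thm1}.
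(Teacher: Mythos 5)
Your proposal is correct and follows essentially the same route as the paper: both arguments isolate the perturbation on $\sigma$ into a quadratic term bounded by $N\epsilon^2\|x\|^2$ plus a cross term bounded via Cauchy--Schwarz and the bound $B_G$ by $2\epsilon\sqrt{N}\sqrt{B_G}\,\|x\|^2$, deliberately keeping $B_F$ out of the lower estimate, and both obtain the upper bound $(B_F+B_G)$ by enlarging the $\sigma$ and $\sigma^c$ sums to all of $I$. The only cosmetic difference is that the paper extracts these two terms from the exact expansion $\sum_{i}\|(H_i-G_i)(x)+G_i(x)\|_2^2=\sum_{i}\|(H_i-G_i)(x)\|_2^2+2\,\mathrm{Re}\sum_{i}[(H_i-G_i)(x),G_i(x)]_{\text{tr}}+\sum_{i}\|G_i(x)\|_2^2$, whereas you reach the same pair of terms by factoring the difference of squares and applying the triangle inequality.
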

\begin{proof}
It is straightforward to obtain the upper frame condition for the weaving sequence $\{H_i\}_{i \in I}$. Precisely, for any $x \in \mathcal{H}$, we have
\begin{align*}
\sum_{i \in I} \|H_i(x)\|_2^2 =\sum_{i \in \sigma} \|F_i(x)\|_2^2 +\sum_{i \in \sigma^c} \|G_i(x)\|_2^2 \leq \sum_{i \in I} \|F_i(x)\|_2^2+\sum_{i \in I} \|G_i(x)\|_2^2 \leq (B_F + B_G)\|x\|^2.
\end{align*}
Next, to prove the lower frame condition, for each $x \in \mathcal{H}$, we compute
\begin{align}\label{eq2.1}
\sum_{i \in I} \|H_i(x)\|_2^2 &=\sum_{i \in I} \|H_i(x)-G_i(x)+G_i(x)\|_2^2\notag\\
& =\sum_{i \in I}[ H_i(x)-G_i(x)+G_i(x), H_i(x)-G_i(x)+G_i(x)]_{\text{tr}}\notag\\
&= \sum_{i \in I} \|H_i(x)-G_i(x)\|_2^2 + 2 \text{Re} \sum_{i \in I} [ H_i(x)-G_i(x), G_i(x) ]_{\text{tr}} + \sum_{i \in I} \|G_i(x)\|_2^2\notag\\
&= \sum_{i \in \sigma} \|F_i(x)-G_i(x)\|_2^2 + 2 \text{Re} \sum_{i \in \sigma} [ F_i(x)-G_i(x), G_i(x) ]_{\text{tr}} + \sum_{i \in I} \|G_i(x)\|_2^2.
\end{align}
We see that
\begin{align}\label{eq2.2}
&2 \text{Re} \sum_{i \in \sigma} [ F_i(x)-G_i(x), G_i(x) ]_{\text{tr}} \notag\\
& \leq 2 \Big|\sum_{i \in \sigma} [ F_i(x)-G_i(x), G_i(x) ]_{\text{tr}} \Big|\notag\\
& \leq 2 \sum_{i \in \sigma}| [ F_i(x)-G_i(x), G_i(x) ]_{\text{tr}}| \notag\\
& \leq 2 \sum_{i \in \sigma}\| F_i(x)-G_i(x)\|_2\| G_i(x)\|_2 \notag\\
&\leq 2 \big(\sum_{i \in \sigma}\| F_i(x)-G_i(x)\|_2^2\big)^{1/2}\big(\sum_{i \in \sigma}\| G_i(x)\|_2^2\big)^{1/2}\notag\\
& \leq 2 \big(\sum_{i \in \sigma}\| F_i-G_i\|^2\|x\|^2\big)^{1/2}\big(\sum_{i \in I}\| G_i(x)\|_2^2\big)^{1/2}\notag\\
&\leq 2 \big(\sum_{i \in \sigma}\epsilon^2\|x\|^2\big)^{1/2}(B_G\|x\|^2)^{1/2}\notag\\
& \leq 2\epsilon\sqrt{N} \sqrt{B_G}\|x\|^2, \, x \in \mathcal{H},
\end{align}
and
\begin{align}
&\sum_{i \in \sigma} \|F_i(x)-G_i(x)\|_2^2 \leq \sum_{i \in \sigma} \|F_i-G_i\|^2 \|x\|^2  \leq \sum_{i \in \sigma} \epsilon^2 \|x\|^2\leq N\epsilon^2 \|x\|^2, \, x \in \mathcal{H}. \label{eq2.3}
\end{align}
Hence, rewriting equation \eqref{eq2.1} and using equations \eqref{eq2.2} and \eqref{eq2.3}, we get
\begin{align*}
\sum_{i \in I} \|H_i(x)\|_2^2 &=\Big|\sum_{i \in \sigma} \|F_i(x)-G_i(x)\|_2^2 + 2 \text{Re} \sum_{i \in \sigma} [ F_i(x)-G_i(x), G_i(x) ]_{\text{tr}} + \sum_{i \in I} \|G_i(x)\|_2^2\Big|\\
&\geq \Big|\sum_{i \in I} \|G_i(x)\|_2^2\Big|-\Big|\sum_{i \in \sigma} \|F_i(x)-G_i(x)\|_2^2 + 2 \text{Re} \sum_{i \in \sigma} [ F_i(x)-G_i(x), G_i(x) ]_{\text{tr}} \Big|\\
& \geq \sum_{i \in I} \|G_i(x)\|_2^2 - \Big|\sum_{i \in \sigma} \|F_i(x)-G_i(x)\|_2^2\Big|-\Big|2 \text{Re} \sum_{i \in \sigma} [ F_i(x)-G_i(x), G_i(x) ]_{\text{tr}} \Big|\\
& \geq A_G\|x\|^2 -N\epsilon^2 \|x\|^2-2\epsilon\sqrt{N} \sqrt{B_G}\|x\|^2\\
&=\big[A_G-N\epsilon^2 -2\epsilon\sqrt{N} \sqrt{B_G}\big]\|x\|^2, \ x \in \mathcal{H}.
\end{align*}
This completes the proof.
\end{proof}

Next, we give a perturbation result that involves a sequence $\{F_i\}_{i \in I}$, which is used over a finite subset $\sigma \subset I$, despite not being an HS-Bessel sequence. The only assumption needed is that the perturbation from $\{G_i\}_{i \in I}$ is controlled locally over finite subsets. This local nature of the condition allows the result to be applied even when $\{F_i\}_{i \in I}$ is defined only partially, or behaves poorly outside a finite set, making the result particularly useful in finite-dimensional approximations and localized perturbation scenarios.
\begin{thm}\label{th4}
Let  $\{G_i\}_{i \in I}$ be an HS-frame for $\mathcal{H}$ with respect to $\mathcal{K}$ with frame bounds $A_G, B_G$, and $\{F_i\}_{i \in I}$ be a sequence of bounded linear operators from  $\mathcal{H}$ into $\mathcal{C}_2(\mathcal{K})$. Suppose $\delta \in (0,1)$ be a constant such that for every finite subset $J \subset I$,
\begin{align*}
\sum_{i \in J} \| (F_i - G_i)(x) \|_2^2 \leq \delta \sum_{i \in J} \| G_i(x) \|_2^2 \quad \text{for all} \ x \in \mathcal{H}.
\end{align*}
Then, for every finite subset $\sigma \subset I$, the weaving $\{H_i\}_{i \in I}$ defined by
\begin{align*}
H_i = \begin{cases}
F_i, & i \in \sigma, \\
G_i, & i \in \sigma^c
\end{cases}
\end{align*}
is an HS-frame for $\mathcal{H}$ with respect to $\mathcal{K}$ with frame bounds $\big[(1 - \sqrt{\delta})^2 A_G\big] \ \text{and} \ \big[2(\delta +1) B_G\big]$.
\end{thm}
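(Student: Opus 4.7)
The plan is to exploit the fact that the perturbation is confined to the finite set $\sigma$: on $\sigma^{c}$ the weaving coincides with $\{G_i\}$, while on $\sigma$ the finiteness of $|\sigma|$ lets us invoke the hypothesis with $J=\sigma$. This is precisely what makes the result usable when $\{F_i\}$ is not globally HS-Bessel, since only its restriction to a finite block ever enters the estimate. The natural split
\begin{align*}
\sum_{i\in I}\|H_i(x)\|_2^2=\sum_{i\in\sigma}\|F_i(x)\|_2^2+\sum_{i\in\sigma^c}\|G_i(x)\|_2^2
\end{align*}
is the starting point for both inequalities.

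For the upper bound, I would write $F_i(x)=G_i(x)+(F_i-G_i)(x)$ and apply the elementary estimate $\|a+b\|_2^{2}\leq 2\|a\|_2^{2}+2\|b\|_2^{2}$ in $\mathcal{C}_2(\mathcal{K})$ together with the hypothesis (with $J=\sigma$) to get
\begin{align*}
\sum_{i\in\sigma}\|F_i(x)\|_2^{2}\leq 2\sum_{i\in\sigma}\|G_i(x)\|_2^{2}+2\sum_{i\in\sigma}\|(F_i-G_i)(x)\|_2^{2}\leq 2(\delta+1)\sum_{i\in\sigma}\|G_i(x)\|_2^{2}.
\end{align*}
Adding the $\sigma^c$ piece, enlarging its coefficient from $1$ to $2(\delta+1)\geq 1$, and invoking the upper frame bound of $\{G_i\}$ yields $\sum_i\|H_i(x)\|_2^{2}\leq 2(\delta+1)B_G\|x\|^{2}$.

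For the lower bound, I would use the reverse $\ell^{2}$-triangle inequality on the finite tuples $(\|F_i(x)\|_2)_{i\in\sigma}$ and $(\|G_i(x)\|_2)_{i\in\sigma}$, together with the pointwise estimate $\|F_i(x)\|_2\geq\|G_i(x)\|_2-\|(F_i-G_i)(x)\|_2$, giving
\begin{align*}
\Bigl(\sum_{i\in\sigma}\|F_i(x)\|_2^{2}\Bigr)^{1/2}\geq\Bigl(\sum_{i\in\sigma}\|G_i(x)\|_2^{2}\Bigr)^{1/2}-\Bigl(\sum_{i\in\sigma}\|(F_i-G_i)(x)\|_2^{2}\Bigr)^{1/2}.
\end{align*}
The hypothesis bounds the subtracted term by $\sqrt{\delta}\bigl(\sum_{i\in\sigma}\|G_i(x)\|_2^{2}\bigr)^{1/2}$, so squaring gives $\sum_{i\in\sigma}\|F_i(x)\|_2^{2}\geq(1-\sqrt{\delta})^{2}\sum_{i\in\sigma}\|G_i(x)\|_2^{2}$. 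Since $\delta\in(0,1)$ forces $(1-\sqrt{\delta})^{2}\leq 1$, the untouched $\sigma^{c}$ part can be under-estimated by the same factor, and the lower frame bound of $\{G_i\}$ delivers $\sum_i\|H_i(x)\|_2^{2}\geq (1-\sqrt{\delta})^{2}A_G\|x\|^{2}$.

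The only real subtlety, rather than a genuine obstacle, is conceptual: the hypothesis is a \emph{local} Bessel-type control of $\{F_i-G_i\}$ valid on every finite $J$, and this locality must be aligned with the finiteness of $\sigma$ so that the hypothesis can legitimately be applied with $J=\sigma$. Once that pairing is made, both the factorization via $\|a+b\|_2^{2}\leq 2\|a\|_2^{2}+2\|b\|_2^{2}$ for the upper bound and the reverse $\ell^{2}$-triangle inequality for the lower bound proceed by standard manipulations.
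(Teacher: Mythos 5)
Your proposal is correct and follows essentially the same route as the paper: the same split over $\sigma$ and $\sigma^c$, the same $\|a+b\|_2^2\leq 2\|a\|_2^2+2\|b\|_2^2$ estimate with $J=\sigma$ for the upper bound, and the same reverse-triangle-inequality idea for the lower bound. The only (immaterial) difference is that you apply the reverse $\ell^2$-triangle inequality in aggregate over $\sigma$, whereas the paper applies the hypothesis to singletons $J=\{i\}$ to get the pointwise bound $\|F_i(x)\|_2\geq(1-\sqrt{\delta})\|G_i(x)\|_2$ and then sums; both yield the identical constant $(1-\sqrt{\delta})^2$.
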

\begin{proof}
From the hypothesis,  for any $i \in I$, it follows that
\begin{align*}
\| (F_i - G_i)(x) \|_2 \leq \sqrt{\delta} \| G_i(x) \|_2 \quad \text{for all} \ x \in \mathcal{H}
\intertext{which gives}
 \| F_i(x) \|_2 \geq  \| G_i(x) \|_2- \| (F_i - G_i)(x) \|_2 \geq (1-\sqrt{\delta}) \| G_i(x) \|_2 .
\end{align*}
Using this, we compute
\begin{align*}
\sum_{i \in I}\| H_i(x) \|_2^2&=\sum_{i \in \sigma}\| F_i(x) \|_2^2 +\sum_{i \in \sigma^c}\| G_i(x) \|_2^2\\
&\geq \sum_{i \in \sigma} (1-\sqrt{\delta})^2 \| G_i(x) \|_2 ^2 +\sum_{i \in \sigma^c}\| G_i(x) \|_2^2\\
& \geq (1-\sqrt{\delta})^2 \sum_{i \in I}\| G_i(x) \|_2^2\\
& \geq (1-\sqrt{\delta})^2 A_G \|x\|^2, \ x \in \mathcal{H}.
\end{align*}
This gives the lower frame inequality for $\{H_i\}_{i \in I}$. Next, we compute
\begin{align*}
\sum_{i \in I}\| H_i(x) \|_2^2&= \sum_{i \in \sigma}\| F_i(x) \|_2^2 +\sum_{i \in \sigma^c}\| G_i(x) \|_2^2\\
&= \sum_{i \in \sigma}\| (F_i - G_i)(x) +G_i(x) \|_2^2 +\sum_{i \in \sigma^c}\| G_i(x) \|_2^2\\
&\leq 2\sum_{i \in \sigma}\| (F_i - G_i)(x)\|_2^2 + 2\sum_{i \in \sigma}\|G_i(x) \|_2^2 +\sum_{i \in \sigma^c}\| G_i(x) \|_2^2\\
&\leq 2\delta \sum_{i \in \sigma} \| G_i(x) \|_2^2 + 2\sum_{i \in \sigma}\|G_i(x) \|_2^2 +\sum_{i \in \sigma^c}\| G_i(x) \|_2^2\\
&= 2(\delta +1) \sum_{i \in \sigma} \| G_i(x) \|_2^2 +\sum_{i \in \sigma^c}\| G_i(x) \|_2^2\\
&\leq  2(\delta +1) \sum_{i \in I} \| G_i(x) \|_2^2 \\
&\leq  2(\delta +1) B_G \|x\|^2,  \ x \in \mathcal{H}.
\end{align*}
Hence, $\{H_i\}_{i \in I}$ is an HS-frame for $\mathcal{H}$ with respect to $\mathcal{K}$ with required frame bounds.
\end{proof}

\subsection{Special Relevance in Finite-Dimensional Settings}

When both $\mathcal{H}$ and $\mathcal{K}$ are finite-dimensional spaces, the results acquire special relevance. Here, Hilbert-Schmidt and operator norms are equivalent, and computations become more tractable. Moreover, the finite index sets naturally match the assumption of finitely supported perturbations, making these results especially applicable to finite sensor networks, discrete signal systems, and matrix data processing. The explicit bounds derived offer concrete criteria for ensuring robust reconstruction under local distortions.

\subsection{Extension to Infinite Weaving with Decaying Perturbations}
While finite perturbations naturally model localized failure, many real-world systems undergo gradual degradation, where frame elements are corrupted in a decaying manner. In such cases, control over the aggregate perturbation, rather than pointwise control, suffices. The next result generalizes Theorem \ref{thm1} by allowing infinitely many perturbed elements, provided the perturbations decay sufficiently fast to ensure square-summability.
\begin{thm}\label{thm3}
Let $\{G_i\}_{i \in I}$ be an HS-frame for $\mathcal{H}$  with respect to $\mathcal{K}$  with frame bounds $A_G, B_G > 0$. Let $\{F_i\}_{i \in I}$ be an HS-Bessel sequence for $\mathcal{H}$  with respect to $\mathcal{K}$ with Bessel bound $B_F > 0$. Suppose
\begin{align*}
\|F_i - G_i\| &\leq \frac{\epsilon}{w_i}, \quad \text{for all } i \in I,
\end{align*}
where $\epsilon > 0$ and the sequence of positive weights $\{w_i\}_{i \in I}$ satisfy
\begin{align*}
\sum_{i \in I} \frac{1}{w_i^2} < \infty  \quad \quad \text{and} \quad \quad  \epsilon \Big(\sum_{i \in I} \frac{1}{w_i^2}\Big)^{1/2}\big(\sqrt{B_F}+ \sqrt{B_G}\big) < A_G.
\end{align*}
Then, for any subset $\sigma \subset I$, the weaving $\{H_i\}_{i \in I}$ given by
\begin{align}\label{defse1}
    H_i &= \begin{cases}
    F_i, & i \in \sigma,\\
    G_i, & i \in \sigma^c
    \end{cases}
\end{align}
forms an HS-frame for $H$ with frame bounds
$\big(A_G - \epsilon \big(\sum_{i \in I} \frac{1}{w_i^2}\big)^{1/2}\big(\sqrt{B_F}+ \sqrt{B_G}\big)\big)$ and $(B_F + B_G)$.
\end{thm}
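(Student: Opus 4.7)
The plan is to mimic the structure of the proof of Theorem \ref{thm1}, but to replace the crude bound $\sum_{i\in\sigma}\|F_i-G_i\|\le N\epsilon$ (which only works for finite $\sigma$) by a weighted Cauchy--Schwarz argument that exploits the summability condition $\sum_i w_i^{-2}<\infty$. Specifically, I would again control the lower bound for $\sum_i\|H_i(x)\|_2^2$ by the difference
\[
\Delta(x):=\Big|\sum_{i\in I}\|H_i(x)\|_2^2-\sum_{i\in I}\|G_i(x)\|_2^2\Big|
=\Big|\sum_{i\in\sigma}\bigl(\|F_i(x)\|_2^2-\|G_i(x)\|_2^2\bigr)\Big|,
\]
and expand each difference via the trace-inner-product identity
$\|F_i(x)\|_2^2-\|G_i(x)\|_2^2=[F_i(x)-G_i(x),F_i(x)]_{\mathrm{tr}}+[G_i(x),F_i(x)-G_i(x)]_{\mathrm{tr}}$, exactly as in Theorem \ref{thm1}. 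Cauchy--Schwarz in $\mathcal{C}_2(\mathcal{K})$ then yields
\[
\Delta(x)\le\sum_{i\in\sigma}\|F_i-G_i\|\,\|x\|\,\bigl(\|F_i(x)\|_2+\|G_i(x)\|_2\bigr).
\]

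The upper frame bound is immediate: since $\{F_i\}$ and $\{G_i\}$ are both HS-Bessel, splitting the weaving sum over $\sigma$ and $\sigma^c$ and discarding the restriction gives $\sum_i\|H_i(x)\|_2^2\le\sum_i\|F_i(x)\|_2^2+\sum_i\|G_i(x)\|_2^2\le(B_F+B_G)\|x\|^2$, matching the stated upper bound.

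For the lower bound, the essential new ingredient is to insert the weights. Using $\|F_i-G_i\|\le\epsilon/w_i$ and applying Cauchy--Schwarz to the sum over $\sigma$ in the form
\[
\sum_{i\in\sigma}\frac{1}{w_i}\|F_i(x)\|_2
\le\Bigl(\sum_{i\in\sigma}\frac{1}{w_i^2}\Bigr)^{1/2}\Bigl(\sum_{i\in\sigma}\|F_i(x)\|_2^2\Bigr)^{1/2}
\le\Bigl(\sum_{i\in I}\frac{1}{w_i^2}\Bigr)^{1/2}\sqrt{B_F}\,\|x\|,
\]
and similarly for the $G_i$ terms with $\sqrt{B_G}$, I would obtain
\[
\Delta(x)\le\epsilon\Bigl(\sum_{i\in I}\tfrac{1}{w_i^2}\Bigr)^{1/2}\bigl(\sqrt{B_F}+\sqrt{B_G}\bigr)\|x\|^2.
\]
Combining this with $\sum_i\|G_i(x)\|_2^2\ge A_G\|x\|^2$ and the assumed strict inequality on $\epsilon$ yields the advertised lower frame bound.

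The only real obstacle is making sure the Cauchy--Schwarz step cleanly absorbs the infinite index set $\sigma$; the Bessel estimate $\sum_{i\in\sigma}\|F_i(x)\|_2^2\le B_F\|x\|^2$ (and likewise for $G$) is exactly what justifies enlarging from $\sigma$ to $I$ in both factors and delivering a bound independent of $\sigma$. This is the conceptual improvement over Theorem \ref{thm1}: the pointwise weight $1/w_i$ together with square-summability replaces the naive cardinality bound $N$, so no finiteness hypothesis on $\sigma$ is needed. The rest of the estimate is routine bookkeeping.
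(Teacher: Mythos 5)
Your proposal is correct and follows essentially the same route as the paper's proof: reduce to the difference $\bigl|\sum_{i\in I}\|H_i(x)\|_2^2-\sum_{i\in I}\|G_i(x)\|_2^2\bigr|$ as in Theorem \ref{thm1}, then replace the cardinality bound by Cauchy--Schwarz against the square-summable weights and enlarge $\sigma$ to $I$ using the Bessel bounds. The only cosmetic difference is the order in which you apply Cauchy--Schwarz and the bound $\|(F_i-G_i)(x)\|_2\le \frac{\epsilon}{w_i}\|x\|$; the resulting estimate and frame bounds are identical to the paper's.
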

\begin{proof}
We only need to prove the lower frame inequality. Let  $\{H_i\}_{i \in I}$ be given by \eqref{defse1}. Using the initial calculations as done in the proof of Theorem \ref{thm1}, for every $x \in \mathcal{H}$, we get
\begin{align*}
&\Big| \sum_{i \in I} \|H_i(x)\|_2^2 - \sum_{i \in I} \|G_i(x)\|_2^2 \Big| \\
&\leq  \sum_{i \in \sigma} \| F_i(x)-G_i(x)\|_2 \big(\|F_i(x) \|_2+\|G_i(x)\|_2\big)\\
&\leq \Big(\sum_{i \in \sigma} \| F_i(x)-G_i(x)\|_2^2\Big)^{1/2}\Big(\sum_{i \in \sigma} \| F_i(x)\|_2^2\Big)^{1/2} +\Big(\sum_{i \in \sigma} \| F_i(x)-G_i(x)\|_2^2\Big)^{1/2}\Big(\sum_{i \in \sigma} \| G_i(x)\|_2^2\Big)^{1/2}\\
&\leq \Big(\sum_{i \in \sigma} \| F_i-G_i\|^2\|x\|^2\Big)^{1/2} \Big[\Big(\sum_{i \in \sigma} \| F_i(x)\|_2^2\Big)^{1/2} + \Big(\sum_{i \in \sigma} \| G_i(x)\|_2^2\Big)^{1/2} \Big]\\
&\leq \Big(\sum_{i \in \sigma} \frac{\epsilon^2}{w_i^2} \|x\|^2\Big)^{1/2} \Big[\Big(\sum_{i \in I} \| F_i(x)\|_2^2\Big)^{1/2} + \Big(\sum_{i \in I} \| G_i(x)\|_2^2\Big)^{1/2} \Big]\\
&\leq \Big(\sum_{i \in I} \frac{\epsilon^2}{w_i^2} \|x\|^2\Big)^{1/2} \big(\sqrt{B_F} \|x\|+ \sqrt{B_G} \|x\|\big)\\
&= \epsilon \Big(\sum_{i \in I} \frac{1}{w_i^2}\Big)^{1/2}\big(\sqrt{B_F}+ \sqrt{B_G}\big) \|x\|^2, \ x \in \mathcal{H}.\\
\end{align*}
Using this, we get
\begin{align*}
\sum_{i \in I} \|H_i(x)\|_2^2 &=\Big|\sum_{i \in I}( \|H_i(x)\|_2^2 - \|G_i(x)\|_2^2 + \|G_i(x)\|_2^2) \Big|\\
&\geq \sum_{i \in I} \|G_i(x)\|_2^2-\Big| \sum_{i \in I} \|H_i(x)\|_2^2 - \sum_{i \in I} \|G_i(x)\|_2^2 \Big|\\
&\geq A_G \|x\|^2- \epsilon \Big(\sum_{i \in I} \frac{1}{w_i^2}\Big)^{1/2}\big(\sqrt{B_F}+ \sqrt{B_G}\big) \|x\|^2\\
&=\Big[A_G - \epsilon \Big(\sum_{i \in I} \frac{1}{w_i^2}\Big)^{1/2}\big(\sqrt{B_F}+ \sqrt{B_G}\big)\Big] \|x\|^2, \ x \in \mathcal{H}.
\end{align*}
This concludes the proof.
\end{proof}

The following example illustrates the applicability of Theorem \ref{thm3} to practical scenarios where infinitely many operators are perturbed, but the perturbations decay sufficiently fast to preserve the frame structure. Unlike previous results which are restricted to finitely supported perturbations, these cases show how infinite weaving remains stable under controlled decay conditions. This framework captures realistic models in which frame elements degrade with decreasing severity for instance, over time, spatial distance, or system iterations, making the theory relevant for adaptive signal systems, sensor arrays, and learning networks. In Example \ref{exa2}(a), we consider polynomial decay in the operator perturbations, allowing the difference $\|F_i-G_i\|$ to diminish like $\frac{1}{i^p}$ for some $p>\frac{1}{2}$. This ensures square-summability of the decay weights and leads to a stable weaving HS-frame regardless of the choice of subset $\sigma$. In Example \ref{exa2}(b), we further strengthen the result by considering exponential decay, where perturbations diminish at a rate $e^{-ci}$. Such exponential models are natural in systems with fast convergence, feedback stabilization, or time-decaying error dynamics.

\begin{exa} \label{exa2}
Let $\mathcal{H}=\mathcal{K}=\ell^2(\mathbb{N})$. Consider the canonical orthonormal basis $\{e_i\}_{i \in \mathbb{N}}$ of $\ell^2(\mathbb{N})$. Define $G_i :\ell^2(\mathbb{N}) \to \mathcal{C}_2(\ell^2(\mathbb{N}))$ by
\begin{align*}
    G_i(x) &= \begin{cases}
    \langle x, e_1 \rangle e_1 \otimes e_1, & i=1\\
    \langle x, e_{i-1} \rangle e_{i} \otimes e_{i}, & i \geq 2.
    \end{cases}
\end{align*}
Then,
\begin{align*}
\|x\|^2 \leq \sum_{i \in  \mathbb{N}}\|G_i(x)\|_2^2&= \| \langle x, e_1 \rangle e_1 \otimes e_1\|_2^2+\sum_{i=2}^\infty \| \langle x, e_{i-1} \rangle e_{i} \otimes e_{i}\|_2^2\\
&=| \langle x, e_1 \rangle |^2+\sum_{i =2}^\infty | \langle x, e_{i-1} \rangle|^2\\
&=| \langle x, e_1 \rangle |^2+\sum_{i =1}^\infty | \langle x, e_{i} \rangle|^2\\
& \leq 2 \|x\|^2, \ x \in \ell^2(\mathbb{N}).
\end{align*}
This shows that $\{G_i\}_{i \in \mathbb{N}}$ is an HS-frame for $\ell^2(\mathbb{N})$ with frame bounds $A_G = 1,  B_G = 2$.\\
Let $\{F_i\}_{i \in \mathbb{N}}$ be a perturbed sequence defined as $F_i = G_i + E_i$, with $\|E_i\| \leq \frac{\epsilon}{w_i}$, where $\epsilon > 0$.
\begin{enumerate}[$(a)$]
\item Define weights $w_i = i^p$ for some $p > \frac{1}{2}$.
Then the perturbations satisfy $\|F_i - G_i\| \leq \frac{\epsilon}{i^p}$, and the perturbations decay polynomially. Then,
\begin{align*}
\sum_{i \in \mathbb{N}} \frac{1}{w_i^2} = \sum_{i \in \mathbb{N}} \frac{1}{i^{2p}} < \infty.
\end{align*}
We see that for any $x \in \ell^2(\mathbb{N})$, we have
\begin{align*}
\sum_{i \in  \mathbb{N}} \|F_i(x)\|_2^2&=\sum_{i \in  \mathbb{N}} \|G_i (x)+ E_i(x)\|_2^2 \\
&\leq 2\sum_{i \in  \mathbb{N}} \|G_i (x)\|_2^2+2\sum_{i \in  \mathbb{N}} \|E_i (x)\|_2^2 \\
&\leq 2 B_G\|x\|^2+2\sum_{i \in  \mathbb{N}} \|E_i\|^2 \|x\|^2\\
&\leq 2 B_G\|x\|^2+2\epsilon^2 \sum_{i \in  \mathbb{N}} \frac{1}{i^{2p}} \|x\|^2\\
&=2\Big(B_G+\epsilon^2 \sum_{i \in  \mathbb{N}} \frac{1}{i^{2p}}\Big)\|x\|^2\\
&=2\big(B_G+\epsilon^2 \zeta(2p)\big)\|x\|^2,
\end{align*}
which shows that $\{F_i\}_{i \in \mathbb{N}}$ is an HS-Bessel sequence with bound $B_F=2\big(B_G+\epsilon^2 \zeta(2p)\big)$, here $\zeta(2p)$ is the Riemann zeta function at $2p$.
Choose $\epsilon > 0$ small enough to satisfy
\begin{align*}
\epsilon \Big(\sum_{i \in \mathbb{N}} \frac{1}{w_i^2}\Big)^{1/2}\big(\sqrt{B_F}+ \sqrt{B_G}\big)=\epsilon \sqrt{\zeta(2p)}\big(\sqrt{2\big(2+\epsilon^2 \zeta(2p)\big)}+ \sqrt{2}\big) < 1=A_G.
\end{align*}
By Theorem \ref{thm3}, for any subset $\sigma \subset \mathbb{N}$, the weaving $\{F_i\}_{i \in \sigma} \cup \{G_i\}_{i \in \sigma^c}$
is an HS-frame for $\ell^2(\mathbb{N})$.

\item Let weights be $w_i = e^{c i}, c \in \mathbb{R}$. Then, the perturbations satisfy $\|F_i - G_i\| \leq \frac{\epsilon}{e^{c i}}$,
and
\begin{align*}
\sum_{i \in \mathbb{N}} \frac{1}{w_i^2} = \sum_{i \in \mathbb{N}} e^{-2 c i} = \frac{e^{-2 c}}{1 - e^{-2 c}} < \infty.
\end{align*}
As done in part $(a)$, $\{F_i\}_{i \in \mathbb{N}}$ is an HS-Bessel sequence with bound $B_F=2\big(B_G+ \frac{\epsilon^2e^{-2 c}}{1 - e^{-2 c}} \big)$.
Choose $\epsilon > 0$ small enough so that
\begin{align*}
\epsilon \Big(\sum_{i \in \mathbb{N}} \frac{1}{w_i^2}\Big)^{1/2}\big(\sqrt{B_F}+ \sqrt{B_G}\big)=\epsilon \sqrt{\frac{e^{-2 c}}{1 - e^{-2 c}}}\left(\sqrt{2\Big(2+ \frac{\epsilon^2e^{-2 c}}{1 - e^{-2 c}}\Big)}+ \sqrt{2}\right) < 1=A_G.
\end{align*}
By Theorem \ref{thm3}, for any subset $\sigma \subset \mathbb{N}$, the weaving $\{F_i\}_{i \in \sigma} \cup \{G_i\}_{i \in \sigma^c}$ is an HS-frame for $\ell^2(\mathbb{N})$.
\end{enumerate}
\end{exa}

\subsection{Weaving of Parseval HS-frames}
The following result shows that if two Parseval HS-frames are sufficiently close in the inner product sense, then every weaving forms an HS-frame.
The closeness is quantified by a uniform lower bound on the real part of the inner product.
\begin{thm}\label{th5}
Let $\{G_i\}_{i \in I}$ and $\{F_i\}_{i \in I}$ be Parseval HS-frames for $\mathcal{H}$ with respect to $\mathcal{K}$. Suppose there exists a constant $\delta \in (0, 1/8)$  such that
\begin{align}\label{eqa1}
\text{Re} \sum_{i \in I} [ F_i(x), G_i(x) ]_{\text{tr}} \geq (1 - \delta) \|x\|^2 \quad \text{for all } x \in \mathcal{H}.
\end{align}
Then, for any $\sigma \subset I$, the weaving $\{H_i\}_{i \in I}$ given by
\begin{align*}
    H_i &= \begin{cases}
    F_i, & i \in \sigma,\\
    G_i, & i \in \sigma^c
    \end{cases}
\end{align*}
is an HS-frame for $\mathcal{H}$ with respect to $\mathcal{K}$ with frame bounds $(1 - 2\sqrt{2\delta})$ and $(1 + 2\sqrt{2\delta})$.
\end{thm}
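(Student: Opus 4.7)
My plan is to exploit the two Parseval identities to first show that the aggregate Hilbert--Schmidt discrepancy $\sum_{i\in I}\|F_i(x)-G_i(x)\|_2^2$ is controlled by $2\delta\|x\|^2$, and then use this single estimate to bound the deviation of $\sum_i\|H_i(x)\|_2^2$ from $\|x\|^2$ uniformly in $\sigma$. The bound must come out as exactly $2\sqrt{2\delta}\|x\|^2$; this dictates which factorization of $\|F_i(x)\|_2^2-\|G_i(x)\|_2^2$ I use.

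\textbf{Step 1 (global discrepancy estimate).} Expanding the trace inner product,
\begin{align*}
\sum_{i\in I}\|F_i(x)-G_i(x)\|_2^2 = \sum_{i\in I}\|F_i(x)\|_2^2 - 2\,\text{Re}\sum_{i\in I}[F_i(x),G_i(x)]_{\text{tr}} + \sum_{i\in I}\|G_i(x)\|_2^2.
\end{align*}
Since both $\{F_i\}$ and $\{G_i\}$ are Parseval, the outer sums each equal $\|x\|^2$, and \eqref{eqa1} controls the middle term, yielding
\begin{align*}
\sum_{i\in I}\|F_i(x)-G_i(x)\|_2^2 \leq 2\|x\|^2 - 2(1-\delta)\|x\|^2 = 2\delta\|x\|^2.
\end{align*}

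\textbf{Step 2 (reduction to a $\sigma$-sum).} Using the Parseval property of $\{G_i\}$, for any $\sigma\subset I$,
\begin{align*}
\sum_{i\in I}\|H_i(x)\|_2^2 - \|x\|^2 = \sum_{i\in\sigma}\bigl(\|F_i(x)\|_2^2 - \|G_i(x)\|_2^2\bigr).
\end{align*}

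\textbf{Step 3 (tight factorization and Cauchy--Schwarz).} I factor each term as a difference of squares and apply the reverse triangle inequality:
\begin{align*}
\bigl|\|F_i(x)\|_2^2 - \|G_i(x)\|_2^2\bigr| \leq \|F_i(x)-G_i(x)\|_2\bigl(\|F_i(x)\|_2 + \|G_i(x)\|_2\bigr).
\end{align*}
Summing over $\sigma$ and applying Cauchy--Schwarz, the Parseval bounds $\sum_{i\in\sigma}\|F_i(x)\|_2^2\leq\|x\|^2$ and $\sum_{i\in\sigma}\|G_i(x)\|_2^2\leq\|x\|^2$ together with Step 1 give
\begin{align*}
\Bigl|\sum_{i\in I}\|H_i(x)\|_2^2 - \|x\|^2\Bigr| \leq \sqrt{2\delta}\,\|x\|\cdot 2\|x\| = 2\sqrt{2\delta}\,\|x\|^2.
\end{align*}
Rearranging yields the two frame inequalities, and the hypothesis $\delta<1/8$ ensures $2\sqrt{2\delta}<1$, so the lower bound is strictly positive.

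\textbf{Main obstacle.} The delicate point is obtaining the constant $2\sqrt{2\delta}$ \emph{without} a stray additive $2\delta$. The tempting expansion $\|F_i(x)\|_2^2 = \|G_i(x)+(F_i-G_i)(x)\|_2^2$ leaves behind a positive $\|(F_i-G_i)(x)\|_2^2$ term, producing the looser bound $2\sqrt{2\delta}+2\delta$, which no longer matches the statement. The difference-of-squares factorization $a^2-b^2=(a-b)(a+b)$, paired with the reverse triangle inequality $|a-b|\leq\|F_i(x)-G_i(x)\|_2$, is what makes the constant tight. Once that choice is made, all remaining steps are routine Cauchy--Schwarz applications.
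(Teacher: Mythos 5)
Your proposal is correct and follows essentially the same route as the paper's proof: reduce to the $\sigma$-sum via the Parseval identity for $\{G_i\}$, factor $\|F_i(x)\|_2^2-\|G_i(x)\|_2^2$ as a product, apply Cauchy--Schwarz, and control $\sum_{i\in I}\|F_i(x)-G_i(x)\|_2^2$ by $2\delta\|x\|^2$ using both Parseval identities and condition \eqref{eqa1}. The only cosmetic difference is that the paper enlarges the sum from $\sigma$ to $I$ before applying Cauchy--Schwarz, while you keep the partial sums over $\sigma$ and bound them by $\|x\|^2$; both give the constant $2\sqrt{2\delta}$.
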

\begin{proof}
First, we observe that for any $x \in \mathcal{H}$, we have
\begin{align}\label{eq1}
\sum_{i \in I} \|H_i(x)\|_2^2
&= \sum_{i \in \sigma} \|F_i(x)\|_2^2 + \sum_{i \in \sigma^c} \|G_i(x)\|_2^2 \notag \\
&= \sum_{i \in \sigma} \|F_i(x)\|_2^2 + \sum_{i \in \sigma^c} \|G_i(x)\|_2^2+\sum_{i \in \sigma} \|G_i(x)\|_2^2-\sum_{i \in \sigma} \|G_i(x)\|_2^2  \notag \\
&= \sum_{i \in \sigma} \|F_i(x)\|_2^2 +\sum_{i \in I} \|G_i(x)\|_2^2-\sum_{i \in \sigma} \|G_i(x)\|_2^2  \notag \\
&=\|x\|^2+\sum_{i \in \sigma} \|F_i(x)\|_2^2 -\sum_{i \in \sigma} \|G_i(x)\|_2^2,  \notag \\
\intertext{so that}
&\Big|\sum_{i \in I} \|H_i(x)\|_2^2 - \|x\|^2\Big|=\Big|\sum_{i \in \sigma} \|F_i(x)\|_2^2 -\sum_{i \in \sigma} \|G_i(x)\|_2^2\Big|.
\end{align}
Now,
\begin{align*}
&\Big|\sum_{i \in \sigma} \|F_i(x)\|_2^2 -\sum_{i \in \sigma} \|G_i(x)\|_2^2\Big| \\
& \leq  \sum_{i \in \sigma} \| F_i(x)-G_i(x)\|_2 \big(\|F_i(x) \|_2+\|G_i(x)\|_2\big)\\
& \leq  \sum_{i \in I} \| F_i(x)-G_i(x)\|_2 \big(\|F_i(x) \|_2+\|G_i(x)\|_2\big)\\
& =  \sum_{i \in I} \| F_i(x)-G_i(x)\|_2 \|F_i(x) \|_2+\sum_{i \in I} \| F_i(x)-G_i(x)\|_2\|G_i(x)\|_2\\
&\leq  \big(\sum_{i \in I}\| F_i(x)-G_i(x)\|_2^2\big)^{1/2}\big(\sum_{i \in I}\| F_i(x)\|_2^2\big)^{1/2}+ \big(\sum_{i \in I}\| F_i(x)-G_i(x)\|_2^2\big)^{1/2}\big(\sum_{i \in I}\| G_i(x)\|_2^2\big)^{1/2} \\
&= 2 \|x\|\big(\sum_{i \in I}\| F_i(x)-G_i(x)\|_2^2\big)^{1/2}\\
&= 2 \|x\|\big(\sum_{i \in I}[F_i(x)-G_i(x), F_i(x)-G_i(x)]_{\text{tr}}\big)^{1/2}\\
&=2 \|x\|\big(\sum_{i \in I}\|F_i(x)\|_2^2 + \sum_{i \in I}\|G_i(x)\|_2^2- 2\text{Re} \sum_{i \in I} [ F_i(x), G_i(x) ]_{\text{tr}}\big)^{1/2}\\
&=2 \|x\|\big(2\|x\|^2- 2\text{Re} \sum_{i \in I} [ F_i(x), G_i(x) ]_{\text{tr}}\big)^{1/2}\\
& \leq 2 \|x\|\big(2\|x\|^2- 2(1 - \delta) \|x\|^2\big)^{1/2}\\
& = 2 \|x\|\big(2\delta\|x\|^2\big)^{1/2}\\
&= 2 \sqrt{2 \delta} \|x\|^2,\\
\intertext{which in equation \eqref{eq1} implies that}
&\Big|\sum_{i \in I} \|H_i(x)\|_2^2 - \|x\|^2\Big| \leq 2 \sqrt{2 \delta} \|x\|^2\\
\intertext{and hence}
&(1-2 \sqrt{2 \delta}) \|x\|^2 \leq \sum_{i \in I} \|H_i(x)\|_2^2  \leq (1+2 \sqrt{2 \delta}) \|x\|^2 \ \text{for all} \ x \in \mathcal{H}.
\end{align*}
The proof is complete.
\end{proof}

The following example illustrates Theorem \ref{th5} from a perturbation perspective. Here, $\{F_i\}_{i \in I}$ is constructed as a small perturbation of a Parseval HS-frame $\{G_i\}_{i \in I}$, so that both remain Parseval HS-frames and are sufficiently close in the inner product sense as required by condition \eqref{eqa1}. As a consequence, every weaving of $\{F_i\}_{i \in I}$ and $\{G_i\}_{i \in I}$ forms an HS-frame with explicitly controlled frame bounds.
\begin{exa}\label{th5exa}
Let $\mathcal{H} = \mathcal{K} = \ell^2(\mathbb{N})$, and let $\{e_i\}_{i \in \mathbb{N}}$ be the canonical orthonormal basis of $\ell^2(\mathbb{N})$. Define $G_i:\ell^2(\mathbb{N}) \to  \mathcal{C}_2(\ell^2(\mathbb{N}))$ as
\begin{align*}
G_i(x)=\langle x, e_i \rangle e_i \otimes e_i, \ i \in \mathbb{N}.
\end{align*}
Then, it can be easily seen that $\{G_i\}_{i \in \mathbb{N}}$ is a Parseval HS-frame for $\ell^2(\mathbb{N})$. \\
Next, define $F_i:\ell^2(\mathbb{N}) \to  \mathcal{C}_2(\ell^2(\mathbb{N}))$ as
\begin{align*}
F_i(x)=\frac{199}{200}\langle x, e_i \rangle e_i \otimes e_i+\frac{\sqrt{399}}{200}\langle x, e_i \rangle e_{i+1} \otimes e_{i+1}, \ i \in \mathbb{N}.
\end{align*}
Then,
\begin{align*}
\sum_{i \in \mathbb{N}} \|F_i(x)\|_2^2
&=\sum_{i \in \mathbb{N}} \left\|\frac{199}{200}\langle x, e_i \rangle e_i \otimes e_i+\frac{\sqrt{399}}{200}\langle x, e_i \rangle e_{i+1} \otimes e_{i+1}\right\|_2^2\\
&=\sum_{i \in \mathbb{N}} \left[\frac{199}{200}\langle x, e_i \rangle e_i \otimes e_i+\frac{\sqrt{399}}{200}\langle x, e_i \rangle e_{i+1} \otimes e_{i+1}, \frac{199}{200}\langle x, e_i \rangle e_i \otimes e_i+\frac{\sqrt{399}}{200}\langle x, e_i \rangle e_{i+1} \otimes e_{i+1}\right]_{\text{tr}}\\
&=\frac{39601}{40000}\sum_{i \in \mathbb{N}}|\langle x, e_i \rangle |^2 [e_i \otimes e_i, e_i \otimes e_i ]_{\text{tr}}
+\frac{199\sqrt{399}}{40000}\sum_{i \in \mathbb{N}}|\langle x, e_i \rangle |^2 [e_i \otimes e_i, e_{i+1} \otimes e_{i+1} ]_{\text{tr}}\\
&+\frac{199\sqrt{399}}{40000}\sum_{i \in \mathbb{N}}|\langle x, e_i \rangle |^2 [ e_{i+1} \otimes e_{i+1}, e_i \otimes e_i ]_{\text{tr}}+\frac{399}{40000}\sum_{i \in \mathbb{N}}|\langle x, e_i \rangle |^2 [e_{i+1} \otimes e_{i+1}, e_{i+1} \otimes e_{i+1} ]_{\text{tr}}\\
&=\frac{39601}{40000}\sum_{i \in \mathbb{N}}|\langle x, e_i \rangle |^2 \langle e_i, e_i \rangle \langle e_i, e_i \rangle
+\frac{199\sqrt{399}}{40000}\sum_{i \in \mathbb{N}}|\langle x, e_i \rangle |^2 \langle e_i, e_{i+1} \rangle \langle e_{i+1}, e_i \rangle\\
&+\frac{199\sqrt{399}}{40000}\sum_{i \in \mathbb{N}}|\langle x, e_i \rangle |^2 \langle e_{i+1}, e_i \rangle \langle e_i, e_{i+1} \rangle+\frac{399}{40000}\sum_{i \in \mathbb{N}}|\langle x, e_i \rangle |^2\langle e_{i+1}, e_{i+1} \rangle \langle e_{i+1}, e_{i+1} \rangle\\
&=\frac{39601}{40000}\sum_{i \in \mathbb{N}}|\langle x, e_i \rangle |^2+0+\frac{399}{40000}\sum_{i \in \mathbb{N}}|\langle x, e_i \rangle |^2+0\\
&=\frac{39601}{40000}\|x\|^2 +\frac{399}{40000}\|x\|^2 \\
&=\|x\|^2 \ \text{for all} \ x \in \ell^2(\mathbb{N}).
\end{align*}
This shows that $\{F_i\}_{i \in \mathbb{N}}$ is a Parseval HS-frame for $\ell^2(\mathbb{N})$. \\
For any $x \in \mathcal{H}$, we compute
\begin{align*}
&\text{Re} \sum_{i \in \mathbb{N}} [ F_i(x), G_i(x) ]_{\text{tr}}\\
&=\text{Re} \sum_{i \in \mathbb{N}}\left(\frac{199}{200} \big[\langle x, e_i \rangle e_i \otimes e_i, \langle x, e_i \rangle e_i \otimes e_i \big]_{\text{tr}}+\frac{\sqrt{399}}{200}\big[\langle x, e_i \rangle e_{i+1} \otimes e_{i+1} , \langle x, e_i \rangle e_i \otimes e_i \big]_{\text{tr}}\right)\\
&=\text{Re} \sum_{i \in \mathbb{N}}\left(\frac{199}{200} |\langle x, e_i \rangle |^2[e_i \otimes e_i, e_i \otimes e_i ]_{\text{tr}}+\frac{\sqrt{399}}{200}|\langle x, e_i \rangle |^2[e_{i+1} \otimes e_{i+1} , e_i \otimes e_i ]_{\text{tr}}\right)\\
&=\text{Re} \sum_{i \in \mathbb{N}}\left(\frac{199}{200} |\langle x, e_i \rangle |^2 \langle e_i, e_i \rangle \langle e_i, e_i \rangle+\frac{\sqrt{399}}{200}|\langle x, e_i \rangle |^2  \langle e_{i+1}, e_i \rangle \langle e_i, e_{i+1} \rangle\right)\\
&=\text{Re} \sum_{i \in \mathbb{N}}\left(\frac{199}{200} |\langle x, e_i \rangle |^2 +0\right)\\
&=\frac{199}{200} \|x\|^2.
\end{align*}
This implies that the condition \eqref{eqa1} of Theorem \ref{th5} is satisfied for $\delta=0.005$. Therefore, corresponding to every $\sigma \subset \mathbb{N}$, the weaving $\{F_i\}_{i \in \sigma} \cup \{G_i\}_{i \in \sigma^c}$ is an HS-frame for $\ell^2(\mathbb{N})$ with frame bounds:
\begin{align*}
A = 1 - 2\sqrt{2\delta}=0.8 \quad \text{and} \quad B = 1 + 2\sqrt{2\delta}=1.2.
\end{align*}
\end{exa}

\section{Conclusion}

This paper presents new results on the stability of Hilbert-Schmidt frames under finite perturbations, motivated by real-world scenarios where only a limited number of frame elements are affected. Unlike classical $\mu$-perturbation results, such as those in \cite{CGK}, which assume uniform perturbation across all frame elements and lead to full weaving, our results address a more practical and common setting in applications. In real-world systems such as wireless communication networks, sensor arrays, and distributed data processing, errors and data loss are typically localized due to packet drop, hardware failure, or transient noise affecting only a small number of frame elements. Our results model this localized distortion by permitting finite changes in frame elements and providing explicit bounds under which the perturbed sequence remains an HS-frame. This localized perturbation model not only reflects actual operational challenges more accurately but also ensures that signal reconstruction and processing systems remain stable and reliable even in the presence of partial data loss.
By bridging the gap between theory and practical implementation, our results contribute a robust and realistic framework for the design of resilient operator-valued frame systems, enhancing their applicability in real-time and fault-tolerant signal processing environments. In addition, we extend our analysis to the infinite setting by introducing decay conditions on the perturbations. These results demonstrate that stable weaving can still be achieved even when infinitely many frame elements are perturbed, provided the perturbations decay sufficiently fast (e.g., polynomially or exponentially).

\mbox{}
\end{document}